\documentclass[psreqno,reqno,11pt]{amsart}
\usepackage{eurosym}
\usepackage{amssymb}
\usepackage{tikz}
\usepackage{amsmath}
\usepackage{tikz}
\usepackage{pgflibraryplotmarks}
\usepackage{wasysym}
\usepackage{stmaryrd}
\usepackage[english]{babel}

\usepackage{hyperref}

\setcounter{MaxMatrixCols}{10}

\linespread{1.3}
\addtolength{\textwidth}{1.0in} \addtolength{\hoffset}{-0.5in}
\theoremstyle{plain}
\newtheorem{theorem}{Theorem}[section]

\newtheorem{lemma}[theorem]{Lemma}

\theoremstyle{definition}

\begin{document}
\def\N{\mathbb{N}}
\def\Z{\mathbb{Z}}
\def\R{\mathbb{R}}
\def\C{\mathbb{C}}
\def\w{\omega}

\title[Some Notes on Orthogonally Additive Polynomials]{
Some Notes on Orthogonally Additive Polynomials}
\author{C. Schwanke}
\address{Department of Mathematics, Lyon College, Batesville, AR, USA and Unit for BMI, North-West University, Private Bag X6001, Potchefstroom, 2520, South Africa}
\email{cmschwanke26@gmail.com}
\date{\today}
\subjclass[2010]{46A40}
\keywords{vector lattice, orthogonally additive polynomial, geometric mean, root mean power}

\begin{abstract}
We provide two new characterizations of bounded orthogonally additive polynomials from a uniformly complete vector lattice into a convex bornological space using separately two polynomial identities of Kusraeva involving the root mean power and the geometric mean. Furthermore, it is shown that a polynomial on a vector lattice is orthogonally additive whenever it is orthogonally additive on the positive cone. These results improve recent characterizations of bounded orthogonally additive polynomials by G. Buskes and the author.
\end{abstract}

\maketitle
\section{Introduction}\label{S:intro}

The $n$th root mean power $\mathfrak{S}_n$ and the $n$th geometric mean $\mathfrak{G}_n$ are defined as
\[
\mathfrak{S}_n(x_1,\dots,x_r)=\sqrt[n]{\sum_{k=1}^{r}x_k^n}\quad (x_1,\dots,x_r\in\mathbb{R})
\]
and
\[
\mathfrak{G}_{n}(x_1,\dots,x_n)=\sqrt[n]{\prod_{k=1}^{n}|x_k|}\quad (x_1,\dots,x_n\in\mathbb{R}),
\]
respectively. In \cite{Kusa}, Kusraeva uses the Archimedean vector lattice functional calculus, as developed in \cite{BusdPvR}, to define $\mathfrak{S}_{n}$ and $\mathfrak{G}_{n}$ in uniformly complete vector lattices. It is proven in \cite{Kusa} that if (i) $E$ is a uniformly complete vector lattice, (ii) $Y$ a convex bornological space, and (iii) $P\colon E\to Y$ is a bounded orthogonally additive $n$-homogeneous polynomial with unique corresponding symmetric $n$-linear map $\check{P}$, then the following hold:
\begin{equation}\label{eq:RMP}
P(\mathfrak{S}_{n}(f_{1},\dots,f_{r}))=\sum_{k=1}^rP(f_{k})\quad (f_{1},\dots,f_{r}\in E^+, r\in\mathbb{N}\setminus\{1\})
\end{equation}
and
\begin{equation}\label{eq:GM}
P(\mathfrak{G}_{n}(f_{1},\dots,f_{n}))=\check{P}(f_{1},\dots,f_{n})\quad (f_{1},\dots,f_{n}\in E^+).
\end{equation}

The story continues in \cite[Theorems 2.3\&2.4]{BusSch4}, where it is shown that the attainment of \textit{both} \eqref{eq:RMP} and \eqref{eq:GM} above provides a single characterization of bounded orthogonally additive polynomials $P\colon E\to Y$. The purpose of this paper is to illustrate that attainment of either \eqref{eq:RMP} or \eqref{eq:GM} \textit{alone} characterizes bounded orthogonally additive polynomials $P\colon E\to Y$. This in turn proves that \eqref{eq:RMP} and \eqref{eq:GM} are actually equivalent in this setting and considerably improves the aforementioned \cite[Theorems 2.3\&2.4]{BusSch4}.

The novel approach in this paper is that we express $\mathfrak{S}_n$ and $\mathfrak{G}_n$ (for $n>1$) in terms of convenient explicit formulas rather than relying solely on functional calculus. This is possible using \cite[Theorem~3.7]{BusSch}, as $\mathfrak{S}_n$ is convex on the positive cone, while $\mathfrak{G}_n$ is concave on the positive cone. Indeed, for a uniformly complete vector lattice $E$ and $n,r\in\mathbb{N}\setminus\{1\}$, we show in the proof of Theorem~\ref{T:RMP+GM} that
\[
\mathfrak{S}_n(f_1,\dots,f_r)=\sup\left\{\sum_{k=1}^ra_kf_k\ :\ a_1,\dots,a_r\in[0,1],\ \sum_{k=1}^ra_k^m=1\right\}
\]
holds for all $f_1,\dots, f_r\in E^+$, where $m$ is the H\"older conjugate of $n$, and $\mathfrak{S}_n(f_1,\dots,f_r)$ is defined via functional calculus. Furthermore, it was proven in \cite[Corollary~3.9]{BusSch} that
\[
\mathfrak{G}_n(f_1,\dots, f_n)=\dfrac{1}{n}\inf\left\{\sum_{k=1}^{n}\theta_kf_k\ :\ \theta_1,...,\theta_n\in(0,\infty),\ \prod_{i=1}^{n}\theta_i=1\right\}
\]
holds for all $f_1,\dots, f_n\in E^+$, where again functional calculus is used to define $\mathfrak{G}_n(f_1,\dots, f_n)$. These explicit formulas greatly aid the obtainment of our results presented in this paper.

We as usual denote the set of strictly positive integers by $\mathbb{N}$ and the ordered field of real numbers by $\mathbb{R}$. All vector spaces in this manuscript are real, and all vector lattices are Archimedean. For any unexplained terminology, notation, or basic theory regarding vector lattices, we refer the reader to the standard texts \cite{AB,LuxZan1,Zan2}.

Let $E$ be a uniformly complete vector lattice, let $V$ be a vector space, and put $n\in\mathbb{N}$. Recall that a map $P\colon E\to V$ is called an $n$-\textit{homogeneous polynomial} if there exists a (unique) symmetric $n$-linear map $\check{P}\colon E^n\to V$ such that $P(f)=\check{P}(f,\dots,f)\ (f\in E)$. (We denote the symmetric $n$-linear map associated with an $n$-homogeneous polynomial $P$ by $\check{P}$ throughout.) Given an $n$-homogeneous polynomial $P\colon E\to V$, $r\in\mathbb{N}$ with $r\leq n$, $f_1,\dots,f_r\in E$, and $k_1,\dots,k_r\in\{0,\dots,n\}$ satisfying $\sum_{i=1}^{r}k_i=n$ we will write
\[
\check{P}(f_1^{k_1}f_2^{k_2}\cdots f_r^{k_r}):=\check{P}(\underbrace{f_1,\dots,f_1}_{k_1\ \text{copies}},\underbrace{f_2,\dots,f_2}_{k_2\ \text{copies}},\dots,\underbrace{f_r,\dots,f_r}_{k_r\ \text{copies}})
\]
and will use similar notation with $\mathfrak{S}_n$ and $\mathfrak{G}_n$ as well. Finally, recall that an $n$-homogeneous polynomial $P\colon E\to V$ is said to be \textit{orthogonally additive} if
\[
P(f+g)=P(f)+P(g)
\]
holds whenever $f,g\in E$ are disjoint. We will also say that $P$ is \textit{positively orthogonally additive} if $P(f+g)=P(f)+P(g)$ holds whenever $f,g\in E^+$ are disjoint.

\section{Main Results}\label{S:MR}

The following lemma is needed in order to obtain the main results in this section.

\begin{lemma}\label{L:posoaeqop}
Let $n\in\mathbb{N}\setminus\{1\}$, let $E$ be a vector lattice, and suppose that $V$ is a vector space. Assume $P\colon E\to V$ is an $n$-homogeneous polynomial. Then $P$ is orthogonally additive if and only if $P$ is positively orthogonally additive.
\end{lemma}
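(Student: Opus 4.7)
The forward direction is immediate, so the task is to show that positive orthogonal additivity implies orthogonal additivity. The plan is to translate both notions, via polarization, into vanishing statements for mixed values of the symmetric $n$-linear map $\check{P}$, and then to reduce the general case to pairwise disjoint positive elements using the decomposition $f=f^+-f^-$, $g=g^+-g^-$.

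The first step is to establish the following polarization principle: if $P$ is positively orthogonally additive, then $\check{P}(h_1^{k_1}\cdots h_r^{k_r})=0$ whenever $h_1,\dots,h_r\in E^+$ are pairwise disjoint and at least two of the exponents $k_i$ are strictly positive. To see this, observe that for nonnegative scalars $t_1,\dots,t_r$ the scaled elements $t_ih_i$ remain pairwise disjoint and positive, so induction on $r$ (invoking positive orthogonal additivity together with the fact that disjointness is preserved under nonnegative linear combinations of pairwise disjoint positive vectors) yields
\[
P\Bigl(\sum_{i=1}^r t_i h_i\Bigr)=\sum_{i=1}^r t_i^n P(h_i).
\]
Expanding the left-hand side via $\check{P}$ and the multinomial theorem gives a polynomial identity in $t_1,\dots,t_r$ valid on $[0,\infty)^r$; since this set has nonempty interior in $\mathbb{R}^r$, the identity holds coefficient-wise, forcing the asserted vanishing.

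The second step handles disjoint $f,g\in E$. From $|f|\wedge|g|=0$ it follows that $f^+,f^-,g^+,g^-$ form a pairwise disjoint family in $E^+$. Expanding
\[
P(f+g)=\check{P}\bigl((f^++g^+-f^--g^-)^n\bigr)
\]
by multilinearity, the polarization principle from Step~1 kills every term except the four ``pure'' contributions $\check{P}((f^+)^n)$, $(-1)^n\check{P}((f^-)^n)$, $\check{P}((g^+)^n)$, and $(-1)^n\check{P}((g^-)^n)$. Applying the same argument separately to $P(f)=\check{P}((f^+-f^-)^n)$ and $P(g)=\check{P}((g^+-g^-)^n)$ shows that each of these decomposes as the sum of the corresponding two pure terms, whence $P(f+g)=P(f)+P(g)$.

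The entire argument is essentially polarization and bookkeeping; the only mildly delicate point is the passage from the identity on $[0,\infty)^r$ to coefficient-wise equality, which rests on the standard fact that a real polynomial vanishing on a set with nonempty interior is identically zero. I do not anticipate any serious obstacle beyond organizing the multilinear expansion cleanly.
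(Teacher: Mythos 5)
Your proposal is correct, and its engine is the same polarization idea the paper uses, but you organize the reduction differently. The paper establishes only the two-variable vanishing $\check{P}(f^{n-k}g^k)=0$ for disjoint $f,g\in E^+$ (scaling one argument by $\lambda\ge 0$ and extracting coefficients via \cite[Lemma~2.1]{BusSch4}), and then finishes by proving $P(f)=P(f^+)+P(-f^-)$, invoking the lattice identity $(f+g)^{\pm}=f^{\pm}+g^{\pm}$ for disjoint $f,g$, applying positive orthogonal additivity a second time to the pairs $f^+\perp g^+$ and $f^-\perp g^-$, and splitting into cases according to the parity of $n$. You instead prove the stronger statement that $\check{P}(h_1^{k_1}\cdots h_r^{k_r})=0$ for any pairwise disjoint family in $E^+$ with at least two positive exponents (essentially positive orthosymmetry of $\check{P}$, which foreshadows the equivalence (ii)$\Leftrightarrow$(iii) of Theorem~\ref{T:char}), and then conclude with a single multinomial expansion over the disjoint family $\{f^+,f^-,g^+,g^-\}$; this eliminates the second use of positive orthogonal additivity, the $(f+g)^{\pm}$ identity, and the even/odd case analysis entirely. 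The price is slightly more bookkeeping in your Step~1: the induction on $r$ (fine, since the disjoint complement of a set is closed under positive linear combinations) and a multi-variable, vector-valued coefficient-matching argument in place of the paper's one-variable citation; the latter is harmless because a $V$-valued polynomial function vanishing on a set with nonempty interior has all coefficients zero (compose with linear functionals, or work in the finite-dimensional span of the coefficients), though you should state this vector-valued version explicitly rather than the purely real one. With that small point made precise, your argument is complete and, in the final step, arguably cleaner than the paper's.
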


\begin{proof}
We need to prove only the nontrivial implication. For this task, assume $P$ is positively orthogonally additive. From the binomial theorem we have
\[
P(f+\lambda g)=P(f)+P(\lambda g)+\sum_{k=1}^{n-1}\binom{n}{k}\lambda^k\check{P}(f^{n-k}g^k)\quad (f,g\in E, \lambda\in\mathbb{R}).
\]
Since $P$ is positively orthogonally additive, it follows that
\[
\sum_{k=1}^{n-1}\binom{n}{k}\lambda^k\check{P}(f^{n-k}g^k)=0
\]
for all $f,g\in E^+$ disjoint and all $\lambda\in\mathbb{R}^+$. From \cite[Lemma~2.1]{BusSch4} we obtain
\begin{equation}\label{eq:termsarezero}
\check{P}(f^{n-k}g^k)=0\quad (f,g\in E^+\ \text{with}\ f\perp g,\ k\in\{1,\dots,n-1\}).
\end{equation}
Using \eqref{eq:termsarezero}, we show that
\[
P(f)=P(f^+)+P(-f^-)
\]
holds for every $f\in E$. To this end, let $f\in E$. Using again the binomial theorem as well as \eqref{eq:termsarezero} above, we get
\begin{align*}
P(f)&=P(f^+-f^-)\\
&=\sum_{k=0}^{n}\binom{n}{k}\check{P}\Bigl((f^+)^{n-k}(-f^-)^k\Bigr)\\
&=\sum_{k=0}^{n}\binom{n}{k}(-1)^k\check{P}\Bigl((f^+)^{n-k}(f^-)^k\Bigr)\\
&=P(f^+)+(-1)^nP(f^-).
\end{align*}
Furthermore, if $n$ is even, then $P$ is even, and so
\[
P(f^+)+(-1)^nP(f^-)=P(f^+)+P(f^-)=P(f^+)+P(-f^-).
\]
On the other hand, if $n$ is odd, then $P$ is odd, and thus
\[
P(f^+)+(-1)^nP(f^-)=P(f^+)-P(f^-)=P(f^+)+P(-f^-).
\]
Thus $P(f)=P(f^+)+P(-f^-)$ holds for all $f\in E$, as claimed. Exploiting this fact, we prove that $P$ is orthogonally additive. To this end, let $f,g\in E$ be disjoint. If $n$ is even, we have
\begin{align*}
P(f+g)&=P\Bigl((f+g)^+\Bigr)+P\Bigl(-(f+g)^-\Bigr)\\
&=P\Bigl((f+g)^+\Bigr)+P\Bigl((f+g)^-\Bigr)\\
&=P(f^++g^+)+P(f^-+g^-)\\
&=P(f^+)+P(f^-)+P(g^+)+P(g^-)\\
&=P(f^+)+P(-f^-)+P(g^+)+P(-g^-)\\
&=P(f)+P(g).
\end{align*}
A similar argument handles the case that $n$ is odd. Therefore, $P$ is orthogonally additive.
\end{proof}

We proceed to our main results.

\begin{theorem}\label{T:RMP+GM}
Let $n,r\in\mathbb{N}\setminus\{1\}$. Suppose $E$ is a uniformly complete vector lattice, $V$ is a vector space, and $P\colon E\to V$ is an $n$-homogeneous polynomial.
\begin{itemize}
\item[(i)] If $P(\mathfrak{S}_{n}(f_{1},\dots,f_{r}))=\sum_{k=1}^rP(f_{k})$ holds for every $f_{1},\dots,f_{r}\in E^{+}$, then $P$ is orthogonally additive.
\item[(ii)] If $P(\mathfrak{G}_{n}(f_{1},\dots,f_{n}))=\check{P}(f_{1},\dots,f_{n})$, holds for all $f_{1},\dots,f_{n}\in E^+$, then $P$ is orthogonally additive.
\end{itemize}
\end{theorem}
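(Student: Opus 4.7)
My plan is to reduce both (i) and (ii) to positive orthogonal additivity via Lemma~\ref{L:posoaeqop}. In each case the heart of the matter is that disjoint positive elements behave trivially under the Archimedean functional calculus: if $f,g\in E^+$ satisfy $f\wedge g=0$, then $fg=0$, and consequently $(f+g)^n=f^n+g^n$ while $f^{n-k}g^k=0$ for every $k\in\{1,\dots,n-1\}$. These two simple consequences are what drive the two parts separately.

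For (i), given disjoint $f,g\in E^+$, the identity $(f+g)^n=f^n+g^n$ and the fact that the $n$th-root and $n$th-power operations invert each other on $E^+$ give
\[
\mathfrak{S}_n(f,g)=\sqrt[n]{f^n+g^n}=\sqrt[n]{(f+g)^n}=f+g.
\]
Applying the hypothesis with $r=2$ then yields $P(f+g)=P(\mathfrak{S}_n(f,g))=P(f)+P(g)$, and Lemma~\ref{L:posoaeqop} upgrades this to full orthogonal additivity.

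For (ii) I would target the mixed terms of the associated symmetric multilinear form. For disjoint $f,g\in E^+$ and any $k\in\{1,\dots,n-1\}$, the relation $fg=0$ forces $f^{n-k}g^k=0$ in $E$, so
\[
\mathfrak{G}_n(f^{n-k}g^k)=\sqrt[n]{f^{n-k}g^k}=0.
\]
The hypothesis together with $P(0)=0$ (which follows from $n$-homogeneity) then gives $\check{P}(f^{n-k}g^k)=0$ for every such $k$. The binomial expansion
\[
P(f+g)=\sum_{k=0}^{n}\binom{n}{k}\check{P}(f^{n-k}g^k)
\]
therefore collapses to $\check{P}(f^n)+\check{P}(g^n)=P(f)+P(g)$, and another invocation of Lemma~\ref{L:posoaeqop} concludes.

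The only genuinely delicate point is the functional-calculus identity $fg=0$ for disjoint $f,g\in E^+$, which is needed before taking $n$th powers and roots; this is standard in the framework of \cite{BusdPvR} but should be cited cleanly. Beyond that, both arguments are short repackagings of the explicit descriptions of $\mathfrak{S}_n$ and $\mathfrak{G}_n$ recalled in the introduction, so I do not anticipate any substantive obstacle. It is worth noting that the explicit sup/inf formulas for $\mathfrak{S}_n$ and $\mathfrak{G}_n$ are not actually invoked in the argument itself; what they accomplish is conceptual, by making transparent why these operations reduce to the elementary calculations above on disjoint inputs.
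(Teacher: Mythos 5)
Your overall skeleton is the same as the paper's: establish that for disjoint $f,g\in E^+$ one has $\mathfrak{S}_n(f,g,0,\dots,0)=f+g$ and $\mathfrak{G}_n(f^{n-k}g^k)=0$ for $k\in\{1,\dots,n-1\}$, feed these into the hypotheses (via the binomial expansion in case (ii)) to get positive orthogonal additivity, and finish with Lemma~\ref{L:posoaeqop}. The gap is in how you justify those two identities. In a uniformly complete vector lattice with no multiplicative structure, the expressions $fg$, $f^n$, $f^{n-k}g^k$ and $(f+g)^n$ are not defined as elements of $E$: the functional calculus of \cite{BusdPvR} assigns a meaning to $h(f_1,\dots,f_r)$ only for continuous functions $h$ that are positively homogeneous of degree one (such as $\mathfrak{S}_n$ and $\mathfrak{G}_n$ themselves), and it transfers identities between such functions valid on $\mathbb{R}^r$; it does not license intermediate algebra with undefined objects. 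So the chain ``$fg=0$, hence $(f+g)^n=f^n+g^n$, hence $\mathfrak{S}_n(f,g)=\sqrt[n]{(f+g)^n}=f+g$'' and the claim ``$f^{n-k}g^k=0$ in $E$, hence $\mathfrak{G}_n(f^{n-k}g^k)=0$'' are not licensed computations as written, and the ``delicate point'' you flag and defer to \cite{BusdPvR} is precisely where the paper does its real work; it cannot be discharged by citation alone because the cited framework does not even define $fg$.

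The paper proves these identities from the explicit formulas of \cite{BusSch}: $\mathfrak{S}_n(f_1,\dots,f_r)$ is the supremum of $\sum_{k}a_kf_k$ over $a_k\in[0,1]$ with $\sum_k a_k^m=1$ ($m$ the H\"older conjugate of $n$), which for disjoint positive $f,g$ is squeezed between $f\vee g=f+g$ and $f+g$; and $\mathfrak{G}_n(f^{n-k}g^k)$ is $\tfrac1n$ times an infimum shown to be $0$ by decomposing any lower bound $l=l_1+l_2\in I_f\oplus I_g$. Alternatively, your pointwise instinct can be made rigorous by passing to the principal ideal generated by $f+g$, representing it as a $C(K)$ space, where the functional calculus acts pointwise and disjointness means disjoint supports; but some such argument must actually be supplied. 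A further small point: in (i) the hypothesis holds for the fixed $r$ of the statement, so you cannot simply ``apply the hypothesis with $r=2$'' when $r>2$; you must pad with zeros, i.e.\ use $\mathfrak{S}_n(f,g,0,\dots,0)$ together with $P(0)=0$, exactly as the paper does with $\mathfrak{S}_n(fg0^{r-2})$.
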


\begin{proof}
We claim that
\[
\mathfrak{S}_n(f_1,\dots,f_r)=\sup\left\{\sum_{k=1}^ra_kf_k\ :\ 0\leq a_1,\dots,a_r\leq 1,\ \sum_{k=1}^ra_k^m=1\right\}
\]
holds for all $f_1,\dots, f_r\in E^+$, where $m$ is the H\"older conjugate of $n$; i.e. $m^{-1}+n^{-1}=1$. To this end, let $f_1,\dots, f_r\in E^+$. Below $\mathfrak{S}_n(f_1,\dots, f_r)$ is defined via the Archimedean vector lattice functional calculus. However, for $c_1,\dots,c_r\in\mathbb{R}$, the $n$th root mean power is defined classically:
\[
\mathfrak{S}_n(c_1,\dots,c_r)=\sqrt[n]{\sum_{k=1}^{r}c_k^n}.
\]
(This type of notation is standard when using the Archimedean vector lattice functional calculus.) It follows from \cite[Theorem~3.7(1)]{BusSch} that
\begin{align*}
	\mathfrak{S}_n(f_1,\dots,f_r)&=\sup\left\{\sum_{k=1}^{r}\dfrac{\partial\mathfrak{S}_n}{\partial x_k}(c_1,\dots,c_r)f_k\ :\ c_1,\dots,c_r\in\mathbb{R}^+,\ \sum_{k=1}^{r}c_k^2=1\right\}\\
	&=\sup\left\{\sum_{k=1}^{r}\dfrac{c_k^{n-1}}{\left(\sum_{i=1}^rc_i^n\right)^{1/m}}f_k\ :\ c_1,\dots,c_r\in\mathbb{R}^+,\ \sum_{k=1}^{r}c_k^2=1\right\}\\
	&=\sup\left\{\sum_{k=1}^{r}a_kf_k\ :\ a_1,\dots,a_r\in[0,1],\ \sum_{k=1}^ra_k^m=1\right\}.
\end{align*}
Next let $f,g\in E^+$ with $f$ and $g$ disjoint. We illustrate that 
\[
f+g=\mathfrak{S}_n(fg0^{r-2}).
\]
Indeed, using that $f$ and $g$ are disjoint and positive, we have
\begin{align*}
	f+g&=f\vee g\\
	&\leq\sup\left\{a_1f+a_2g\ :\ a_1,a_2\in[0,1],\ a_1^m+a_2^m=1\right\}\\
	&\leq f+g.
\end{align*}
However,
\[
\mathfrak{S}_n(fg0^{r-2})=\sup\left\{a_1f+a_2g\ :\ a_1,a_2\in[0,1],\ a_1^m+a_2^m=1\right\}
\]
holds from the explicit formula for $\mathfrak{S}_n$ given above. Thus $f+g=\mathfrak{S}_n(fg0^{r-2})$, as claimed. Next let $P\colon E\to V$ be an $n$-homogeneous polynomial satisfying
\[
P(\mathfrak{S}_{n}(f_{1},\dots,f_{r}))=\sum_{k=1}^rP(f_{k})\quad (f_{1},\dots,f_{r}\in E^{+}).
\]
It follows from our argument above that
\begin{align*}
P(f+g)&=P(\mathfrak{S}_n(fg0^{r-2}))=P(f)+P(g).
\end{align*}
Hence $P$ is positively orthogonally additive. By Lemma~\ref{L:posoaeqop}, we have that $P$ is orthogonally additive. This completes the proof of (i).

To prove (ii), let $f,g\in E^+$ be disjoint, and put $k\in\{1,\dots,n-1\}$. By \cite[Corollary~3.9]{BusSch}, we have
\[
\mathfrak{G}_n(f^{n-k}g^k)=\dfrac{1}{n}\inf\left\{\sum_{i=1}^{n-k}\theta_if+\sum_{i=n-k+1}^{n}\theta_ig\ :\ \theta_1,\dots,\theta_n\in(0,\infty),\ \prod_{i=1}^{n}\theta_i=1\right\}.
\]
Clearly, $\mathfrak{G}_n(f^{n-k}g^k)\geq 0$. Suppose that
\[
l\leq\sum_{i=1}^{n-k}\theta_if+\sum_{i=n-k+1}^{n}\theta_ig
\]
holds for all $\theta_1,\dots,\theta_n\in(0,\infty)$ for which $\prod_{i=1}^{n}\theta_i=1$. Then $l=l_1+l_2\in I_f\oplus I_g$, where $I_f$ and $I_g$ are the principal ideals generated by $f$ and $g$, respectively. Then
\[
l_1\leq\sum_{i=1}^{n-k}\theta_if\quad\ \text{and}\quad l_2\leq\sum_{i=n-k+1}^{n}\theta_ig
\]
both hold for all $\theta_1,\dots,\theta_n\in(0,\infty)$ such that $\prod_{i=1}^{n}\theta_i=1$. We conclude that $l\leq 0$ and thus $\mathfrak{G}_n(f^{n-k}g^k)=0$ holds for all $k\in\{1,\dots,n-1\}$. Next let $P\colon E\to V$ be an $n$-homogeneous polynomial satisfying
\[
P(\mathfrak{G}_{n}(f_{1},\dots,f_{n}))=\check{P}(f_{1},\dots,f_{n})\quad (f_{1},\dots,f_{n}\in E^{+}).
\]
Utilizing the fact that $\mathfrak{G}_n(f^{n-k}g^k)=0$ for each $k\in\{1,\dots,n-1\}$, we have
\begin{align*}
P(f+g)&=P(f)+P(g)+\sum_{k=1}^{n-1}\binom{n}{k}\check{P}(f^{n-k}g^k)\\
&=P(f)+P(g)+\sum_{k=1}^{n-1}\binom{n}{k}P\Bigl(\mathfrak{G}_n(f^{n-k}g_k)\Bigr)\\
&=P(f)+P(g).
\end{align*}
Thus $P$ is positively orthogonally additive. The orthogonal additivity of $P$ now follows from Lemma~\ref{L:posoaeqop}. The proof is now complete.
\end{proof}

Given $n\in\mathbb{N}\setminus\{1\}$, a vector lattice $E$, and a vector space $V$, we remind the reader that an $n$-linear map $T\colon E^n\to V$ is termed \textit{orthosymmetric} if $T(f_1,\dots,f_n)=0$ whenever $f_1,\dots,f_n\in E$ and there exist $i,j\in\{1,\dots,n\}$ such that $f_i\perp f_j$. A straightforward application of the binomial theorem shows that every $n$-homogeneous polynomial $P\colon E\to V$ with $\check{P}$ orthosymmetric is orthogonally additive. Thus combining Lemma~\ref{L:posoaeqop} and Theorem~\ref{T:RMP+GM} above with the main result of \cite{Kusa}, \cite[Lemma 4]{Kusa2}, and \cite[Theorems~2.3~and~2.4]{BusSch4}, we obtain the following characterizations of bounded orthogonally additive polynomials from a uniformly complete vector lattice to a convex bornological space. Theorem~\ref{T:char} below improves \cite[Theorems~2.3~and~2.4]{BusSch4} considerably. For more information on complex vector lattices and the complexification of symmetric multilinear maps, we refer the reader to \cite[Section 1]{BusSch4}.

\begin{theorem}\label{T:char}
	Let $E$ be a uniformly complete vector lattice, let $Y$ be a convex bornological space, put $n,r\in\mathbb{N}\setminus\{1\}$, and let $P\colon E\to Y$ be a bounded $n$-homogeneous polynomial. The following are equivalent.
	\begin{itemize}
		\item[(i)] $P$ is orthogonally additive.
		\item[(ii)] $P$ is positively orthogonally additive.
		\item[(iii)] $\check{P}$ is orthosymmetric.
		\item[(iv)] $\check{P}(f^{n-k}g^{k})=0$ for every $k\in\lbrace 1,\dots,n-1\rbrace$ whenever $f\perp g$.
		\item[(v)] $P\bigl(\mathfrak{S}_{n}(f_1,\dots,f_r)\bigr)=\sum_{k=1}^{r}P(f_k)$ holds for all $f_{1},\dots,f_{r}\in E^{+}$.
		\item[(vi)] $P\bigl(\mathfrak{G}_n(f_1,\dots,f_n)\bigr)=\check{P}(f_1,\dots,f_n)$ holds for all $f_{1},\dots,f_{n}\in E^{+}$.
		\item[(vii)] $P(|z|)=\check{P}_{\C}(z^{\frac{n}{2}}(\bar{z})^{\frac{n}{2}})$ holds for all $z\in E_{\C}$ if $n$ is even, while if $n$ is odd, then $P(|z|)=\check{P}_{\C}(z^{\frac{n-1}{2}}(\bar{z})^{\frac{n-1}{2}}|z|)$ holds for every $z\in E_{\C}$.
	\end{itemize}
\end{theorem}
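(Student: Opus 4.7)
The plan is to treat condition (i) as a hub and show each remaining condition equivalent to it, drawing on three inputs: Lemma~\ref{L:posoaeqop}, Theorem~\ref{T:RMP+GM}, and Kusraeva's main result from \cite{Kusa}. The equivalence (i)~$\Leftrightarrow$~(ii) is immediate from Lemma~\ref{L:posoaeqop}; notably, neither boundedness of $P$ nor the bornological structure on $Y$ is used in that step.

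For the block involving (i), (iii), and (iv), I would obtain (iii)~$\Rightarrow$~(i) from the binomial-theorem remark made just before the theorem: if $f\perp g$, then $P(f+g)=\sum_{k=0}^{n}\binom{n}{k}\check{P}(f^{n-k}g^{k})$ and every cross term vanishes by orthosymmetry. The converse (i)~$\Rightarrow$~(iii) is the main result of \cite{Kusa}; this is the only place where the hypotheses of boundedness of $P$, uniform completeness of $E$, and the bornological structure of $Y$ genuinely enter the argument. The step (iii)~$\Rightarrow$~(iv) is trivial, while (iv)~$\Rightarrow$~(ii) follows from the same binomial expansion: for positive disjoint $f$ and $g$, every cross term vanishes by (iv), yielding $P(f+g)=P(f)+P(g)$, and Lemma~\ref{L:posoaeqop} then upgrades (ii) to (i).

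The equivalences (i)~$\Leftrightarrow$~(v) and (i)~$\Leftrightarrow$~(vi) are now short: the forward directions (i)~$\Rightarrow$~(v) and (i)~$\Rightarrow$~(vi) are precisely the identities \eqref{eq:RMP} and \eqref{eq:GM} proved by Kusraeva in \cite{Kusa}, while the converse directions are exactly parts (i) and (ii) of Theorem~\ref{T:RMP+GM} proved above.

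Finally, for the complex characterization (vii), I would invoke \cite[Lemma~4]{Kusa2} together with the complexification machinery recalled in \cite[Section~1]{BusSch4} and the corresponding equivalences established in \cite[Theorems~2.3~and~2.4]{BusSch4}, which already tie (vii) to the orthosymmetry condition (iii). The main obstacle I anticipate is bookkeeping rather than any new computation: quoting the hypotheses for (i)~$\Rightarrow$~(iii) accurately from \cite{Kusa}, and verifying that the complex identity in (vii) lines up with what is proved for $\check{P}_{\C}$ on $E_{\C}$ via the complexification procedure. All remaining steps are one-line binomial expansions or direct appeals to theorems already in the literature.
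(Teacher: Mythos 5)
Your proposal is correct and takes essentially the same route as the paper, which likewise assembles the equivalences from Lemma~\ref{L:posoaeqop}, Theorem~\ref{T:RMP+GM}, the binomial-theorem remark, Kusraeva's results, and \cite[Theorems~2.3~and~2.4]{BusSch4} for (vii). One small correction to your bookkeeping: boundedness of $P$ and the bornological structure on $Y$ are needed not only for (i)~$\Rightarrow$~(iii) but also for (i)~$\Rightarrow$~(v) and (i)~$\Rightarrow$~(vi), since Kusraeva's identities \eqref{eq:RMP} and \eqref{eq:GM} are established precisely for bounded orthogonally additive polynomials into a convex bornological space.
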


\bibliography{mybib}
\bibliographystyle{amsplain}

\end{document}